\def\pref#1{(\ref{#1})}
\theoremstyle{plain}
\newtheorem{prop}{Proposition}[section]
\newtheorem{thm}[prop]{Theorem}
\newtheorem{lem}[prop]{Lemma}
\newtheorem{cor}[prop]{Corollary}
\theoremstyle{definition}
\def\p@figure{Fig. }
\def\p@enumii{}
\newcommand{\V}{\mathrm{V}}
\newcommand{\E}{\mathrm{E}}
\newcommand{\N}{\mathrm{N}}
\newcommand{\B}{\mathrm{B}}
\newcommand{\sqc}{$\mathcal{SQC}$}
\renewcommand{\b}[1]{\overline{#1}}
\newcommand{\se}{\subseteq}
\newcommand{\link}{\mathrm{link}}
\newcommand{\sm}{\setminus }
\newcommand{\ifof}{if and only if }
\newcommand{\floor}[1]{\left\lfloor{#1}\right\rfloor}
\newcommand{\tohi}{\emptyset}
\newcommand{\n}{\mathbb{N}}
\newcommand{\ca}{\mathcal{A}}
\begin{document}

\title{On Gorenstein Circulant Graphs and Gorenstein \sqc\ Graphs}
\author{Ashkan Nikseresht and Mohammad Reza Oboudi\\
\it\small Department of Mathematics, Shiraz University,\\
\it\small 71457-13565, Shiraz, Iran\\
\it\small E-mail: ashkan\_nikseresht@yahoo.com\\
\it\small E-mail: mr\_oboudi@yahoo.com }
\date{}
\maketitle

\begin{abstract}
We characterize some graphs with a Gorenstein edge ideal. In particular, we show that if $G$ is a circulant graph
with vertex degree at most four or a circulant graph of the form $C_n(1,\ldots, d)$ for some $d\leq n/2$, then $G$ is
Gorenstein if and only if $G\cong tK_2$, $G\cong t\b{C_n}$ or $G\cong tC_{13}(1,5)$ for some integers $t$ and $n\geq
4$. Also we prove that if $G$ is a \sqc\ graph, then $G$ is Gorenstein if and only if each component of $G$ is either
an edge or a 5-cycle.
\end{abstract}

Keywords:  Gorenstein simplicial complex; Edge ideal; Circulant graph; \sqc\ graph.\\
\indent 2010 Mathematical Subject Classification: 13F55, 13H10, 05E40.

                                        \section{Introduction}

Throughout this paper, $K$ is a field, $S=K[x_1,\ldots, x_n]$ and $G$ denotes a simple undirected graph with vertex set
$\V(G)=\{v_1,\ldots, v_n\}$ and edge set $\E(G)$. Recall that the \emph{edge ideal} $I(G)$ of $G$ is the ideal of $S$
generated by $\{x_ix_j|v_iv_j\in \E(G)\}$. Many researchers have studied how algebraic properties of $S/I(G)$ relates
to the combinatorial properties of $G$ (see \cite{stanley, hibi, tri-free} and references therein). One of the
algebraic properties that recently has gained attention is the property of being Gorenstein. We say that $G$ is a
Gorenstein (resp, Cohen-Macaulay or CM for short) graph over $K$, if $S/I(G)$ is a Gorenstein (resp. CM) ring. When $G$
is Gorenstein (resp. CM) over every field, we say that $G$ is Gorenstein (resp. CM). In \cite{large girth} and
\cite{tri-free} a characterization of planar Gorenstein graphs of girth $\geq 4$ and triangle-free Gorenstein graphs,
respectively, is presented. Also in \cite{planar goren} a condition on a planar graph equivalent to being Gorenstein is
stated.

In this paper, first we recall some needed concepts and preliminary results. Then in Section 3, we characterize some
graphs with a Gorenstein edge ideal. In particular, we show that if $G$ is a circulant graph with vertex degree at most
four or a circulant graph of the form $C_n(1,\ldots, d)$ for some $d\leq n/2$, then $G$ is Gorenstein if and only if
$G\cong tK_2$, $G\cong t\b{C_n}$ or $G\cong tC_{13}(1,5)$ for some integers $t$ and $n\geq 4$. Also we prove that if
$G$ is a \sqc\ graph, then $G$ is Gorenstein if and only if each component of $G$ is either an edge or a 5-cycle.

                                        \section{Preliminaries}

Recall that a \emph{simplicial complex} $\Delta$ on the vertex set $V=\{v_1,\ldots, v_n\}$ is a family of subsets of
$V$ (called \emph{faces}) with the property that $\{v_i\}\in \Delta$ for each $i\in [n]=\{1,\ldots,n\}$ and if $A\se
B\in \Delta$, then $A\in \Delta$. In the sequel, $\Delta$ always denotes a simplicial complex. Thus the family
$\Delta(G)$ of all cliques of a graph $G$ is a simplicial complex called the \emph{clique complex} of $G$. Also
$\Delta(\b G)$ is called the \emph{independence complex} of $G$, where $\b G$ denotes the complement of $G$. Note that
the elements of $\Delta(\b G)$ are independent sets of $G$. The ideal of $S$ generated by $\{\prod_{v_i\in F} x_i|F\se
V$ is a non-face of $\Delta\}$ is called the \emph{Stanley-Reisner ideal} of $\Delta$ and is denoted by $I_\Delta$ and
$S/I_\Delta$ is called the \emph{Stanley-Reisner algebra} of $\Delta$ over $K$. Therefore we have $I_{\Delta(\b G)}=
I(G)$. Many researchers have studied the relation between combinatorial properties of $\Delta$ and algebraic properties
of $S/I_\Delta$, see for example \cite{hibi,stanley, bal ver dec, our chordal, my vdec} and their references.

By the dimension of a face $F$ of $\Delta$, we mean $|F|-1$ and the dimension of $\Delta$ is defined as
$\max\{\dim(F)|F\in \Delta\}$. Denote by $\alpha(G)$ the \emph{independence number} of $G$, that is, the maximum size
of an independent set of $G$. Note that $\alpha(G)=\dim \Delta(\b G)+1$. A graph $G$ is called \emph{well-covered}, if
all maximal independent sets of $G$ have size $\alpha(G)$ and it is said to be a \emph{W$_2$ graph}, if $|\V(G)|\geq 2$
and every pair of disjoint independent sets of $G$ are contained in two disjoint maximum independent sets. In some
texts, W$_2$ graphs are called 1-well-covered graphs. In the following lemma, we have collected some known results on
W$_2$ graphs and their relation to Gorenstein graphs.
\begin{lem}\label{W2}
\begin{enumerate}
\item \label{Goren=> W2} (\cite[Lemma 3.1]{large girth} or \cite[Lemma 3.5]{tri-free})If $G$ is a graph without
    isolated vertices and $G$ is Gorenstein over some field $K$, then $G$ is a W$_2$ graph.

\item \label{tri-free Goren} (\cite[Proposition 3.7]{tri-free}) If $G$ is triangle-free (that is, no subgraph of $G$
    is a triangle) and without isolated vertices, then $G$ is Gorenstein \ifof $G$ is W$_2$.


\item \label{W2 deg} (\cite[Theorem 4]{W2}) The degree of every vertex of a connected non-complete W$_2$ graph is at
    least 2.
\end{enumerate}
\end{lem}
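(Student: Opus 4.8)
My plan is to prove the three parts by separate arguments, since they rest on different tools; parts (i) and (ii) are the substantive ones, while (iii) is elementary.

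For part (i), I would pass to the independence complex $\Delta=\Delta(\b{G})$ and use $I(G)=I_{\Delta(\b{G})}$, so that ``$G$ Gorenstein over $K$'' means the Stanley--Reisner ring $K[\Delta]$ is Gorenstein. As Gorenstein rings are Cohen--Macaulay, $K[\Delta]$ is CM, i.e.\ $\Delta$ is CM over $K$ and hence pure, so $G$ is well-covered. Because $G$ has no isolated vertices, $\Delta$ has no cone apex (an isolated vertex of $G$ can be adjoined to every independent set, so it would be an apex), whence $\Delta=\mathrm{core}(\Delta)$ and, by Stanley's structure theorem for Gorenstein complexes, $\Delta$ is Gorenstein* over $K$. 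The key input is the known fact that a Gorenstein* complex is doubly Cohen--Macaulay: for every vertex $x$ the deletion $\Delta\sm x$ is CM over $K$ of dimension $\dim\Delta$. Reading this back through $\Delta\sm x=\Delta(\b{G-x})$ shows that $G-x$ is well-covered with $\alpha(G-x)=\alpha(G)$ for every $x$. Together with well-coveredness of $G$, the standard combinatorial characterization of W$_2$ graphs in terms of well-coveredness of $G$ and of all single-vertex deletions $G-x$ (with unchanged independence number) yields that $G$ is W$_2$.

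For part (ii), the forward implication is precisely part (i), so the content is the converse: a triangle-free W$_2$ graph without isolated vertices is Gorenstein over every field. I would aim to show $\Delta(\b{G})$ is Gorenstein* over an arbitrary $K$. Triangle-freeness means every clique of $G$ is a vertex or an edge, which constrains the links of $\Delta(\b{G})$: the link of a vertex is the independence complex of the smaller triangle-free graph obtained by deleting the closed neighborhood of that vertex, so an inductive scheme is available. The steps would be: (a) get purity from well-coveredness; (b) establish Cohen--Macaulayness over every field, for instance by producing a shedding vertex and proving vertex-decomposability of the independence complex; and (c) verify the Gorenstein condition itself, namely that every link is a homology sphere over $K$, by induction together with a symmetry check on the top reduced homology. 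Alternatively one may try to classify the connected triangle-free W$_2$ graphs and check Gorensteinness of the resulting short list directly, using that a disjoint union of Gorenstein graphs is Gorenstein (the quotient being a tensor product of Gorenstein $K$-algebras). The main obstacle is step (c): upgrading Cohen--Macaulay to Gorenstein requires the sphere-like top-homology condition, and triangle-freeness is exactly the property that makes the independent-set structure rigid enough for this to hold.

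Part (iii) I would argue directly. Suppose $G$ is connected, non-complete and W$_2$, but some vertex has degree at most $1$. Connectedness with $|\V(G)|\geq 2$ excludes degree $0$, so there is a vertex $v$ with a unique neighbor $u$. If $u$ had degree $1$ as well, then $\{u,v\}$ would be an entire component and $G=K_2$ would be complete; hence $\deg u\geq 2$ and we may pick $w\in\N(u)\sm\{v\}$. Now $\{v\}$ and $\{w\}$ are disjoint independent sets, as the only neighbor of $v$ is $u\neq w$. By the W$_2$ property they extend to disjoint maximum independent sets $A\ni v$ and $B\ni w$. Since $w\in B$ is adjacent to $u$, we have $u\notin B$; and since $A\cap B=\emptyset$ with $v\in A$, we have $v\notin B$. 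But then $v$ has no neighbor in $B$, so $B\cup\{v\}$ is independent, contradicting the maximality of $B$. Therefore every vertex of $G$ has degree at least $2$.
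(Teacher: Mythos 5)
Note first that the paper does not prove Lemma \ref{W2} at all: it is presented explicitly as a collection of known results, each part cited to the literature (\cite{large girth}, \cite{tri-free}, \cite{W2}), so the comparison is between your arguments and those sources. Your part (iii) is correct and complete, and is essentially Staples' own elementary argument. Your part (i) is also correct and follows the same route as the cited proof in \cite[Lemma 3.1]{large girth}: absence of isolated vertices gives $\mathrm{core}(\Delta(\b G))=\Delta(\b G)$, so Gorenstein means Gorenstein$^*$; Gorenstein$^*$ complexes are doubly CM; hence every deletion $\Delta(\b G)\sm x=\Delta(\b{G-x})$ is CM of the same dimension, so $G-x$ is well-covered with $\alpha(G-x)=\alpha(G)$ for every vertex $x$; and the deletion characterization of W$_2$ graphs finishes. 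You do import two nontrivial facts (Gorenstein$^*$ $\Rightarrow$ doubly CM, and Staples' characterization of W$_2$ via vertex deletions), but both are standard and citable, and this is exactly the structure of the cited argument.

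The genuine gap is the backward implication of part (ii): that a triangle-free W$_2$ graph without isolated vertices is Gorenstein over every field. This is the main theorem of \cite{tri-free}, and what you write for it is a plan, not a proof. Concretely: (a) you give no argument that the independence complex of such a graph is vertex decomposable, or even CM over every field --- no shedding vertex is exhibited; (b) your proposed induction has no established mechanism: one needs at least that $G\sm\N[v]$ is again triangle-free and W$_2$, a lemma which itself requires proof in \cite{tri-free}; and (c) the ``symmetry check on the top reduced homology'' is not an argument but a restatement of the Gorenstein$^*$ condition you are trying to verify --- you yourself flag it as ``the main obstacle''. The fallback of classifying connected triangle-free W$_2$ graphs and checking a ``short list'' is not available either: the class is infinite (besides $K_2$, $C_5$ and $C_{13}(1,5)$, Pinter's construction \cite{pinter1} yields planar W$_2$ graphs of girth $4$ on arbitrarily many vertices), which is precisely why \cite{tri-free} characterizes these graphs by the W$_2$ property rather than by a list. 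So part (ii) should either be quoted as a known theorem, as the paper does, or given the full argument of \cite{tri-free}; as it stands, its hard direction is assumed rather than proved.
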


Recall that if $F\in \Delta$, then $\link_\Delta(F)=\{A\sm F| F\se A\in \Delta\}$. Suppose that $F\se \V(G)$. By
$\N[F]$ we mean $F\cup \{v\in \V(G)| uv\in \E(G)$ for some $u\in F\}$ and we set $G_F=G\sm \N[F]$. Thus if $F$ is
independent, then $\link_{\Delta(\b G)} F= \Delta(\b{G_F})$. Also the polynomial $I(G,x)=\sum_{i=0}^{\alpha(G)}
a_ix^i$, where $a_i$ is the number of independent sets of size $i$ in $G$, is called the \emph{independence polynomial}
of $G$. It should be mentioned that, if we set $f_i=a_{i+1}$ for $-1\leq i<\alpha(G)$, then the vector $(f_{-1},
f_0,\ldots, f_{\alpha(G)-1})$ is called the $f$-vector of $\Delta(\b G)$ in the literature of combinatorial commutative
algebra.

Theorem 5.1 of \cite{stanley} states conditions on $\Delta$ equivalent to the statement ``$\Delta$ is Goresnstein over
$K$''. Applying this theorem to the independence complex of $G$, we deduce the following characterization of Gorenstein
graphs. The details of the proof can be found in \cite{alpha=3}.
\begin{thm}\label{goren graphs}
Suppose that $G$ is a graph without isolated vertices. Then $G$ is Gorenstein over $K$ \ifof $G$ is CM over $K$,
$I(G,-1)= (-1)^{\alpha(G)}$ and $\b {G_F}$ is a cycle of length at least $4$ for each independent set $F$ of $G$ with
size $\alpha(G)-2$. In particular, if $\alpha(G)=2$, then $G$ is Gorenstein \ifof $G$ is the complement of a cycle.
\end{thm}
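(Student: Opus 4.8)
The plan is to apply Theorem~5.1 of \cite{stanley} to the independence complex $\Delta=\Delta(\b G)$. Since $I_{\Delta(\b G)}=I(G)$, the assertion ``$G$ is Gorenstein over $K$'' is literally ``$\Delta(\b G)$ is Gorenstein over $K$''. Stanley's criterion phrases Gorensteinness through the \emph{core} of $\Delta$, so the first step is to identify the cone points of $\Delta(\b G)$: a vertex $v$ lies in every facet of $\Delta(\b G)$ \ifof $\{v\}\cup A$ is independent for every independent set $A$, i.e.\ \ifof $v$ is isolated in $G$. As $G$ has no isolated vertices, $\Delta(\b G)$ equals its own core, and Stanley's theorem reduces to the statement that $\Delta(\b G)$ is Gorenstein over $K$ \ifof it is Cohen--Macaulay over $K$ and $\widetilde{H}_{\dim\link_\Delta F}(\link_\Delta F;K)\cong K$ for every face $F\in\Delta(\b G)$.

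Next I would split these two requirements. By Reisner's criterion the Cohen--Macaulay condition is exactly the vanishing $\widetilde{H}_i(\link_\Delta F;K)=0$ for all $F$ and all $i<\dim\link_\Delta F$, which is condition (i). Granting (i), the top homology is the only possibly nonzero reduced homology of $\link_\Delta F$, so its dimension equals $(-1)^{\dim\link_\Delta F}\,\widetilde{\chi}(\link_\Delta F)$; hence ``$\cong K$'' is equivalent to the equality $\widetilde{\chi}(\link_\Delta F)=(-1)^{\dim\link_\Delta F}$. I then translate this using the recorded identity $\link_{\Delta(\b G)}F=\Delta(\b{G_F})$ for independent $F$, together with $\dim\link_\Delta F=\alpha(G_F)-1$ and the bookkeeping $\widetilde{\chi}(\Delta(\b{G_F}))=-I(G_F,-1)$ (which follows from $f_i=a_{i+1}$). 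The upshot is that, assuming (i), $G$ is Gorenstein over $K$ \ifof $I(G_F,-1)=(-1)^{\alpha(G_F)}$ for every independent set $F$ of $G$.

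It remains to show, under (i), that this family of equalities over all $F$ is equivalent to (ii) together with (iii). The forward implication is the easy one: the case $F=\tohi$ (where $G_\tohi=G$) is precisely (ii), and for $|F|=\alpha(G)-2$ well-coveredness --- a consequence of (i) --- gives $\alpha(G_F)=2$, so $\link_\Delta F=\Delta(\b{G_F})$ is a $1$-dimensional Cohen--Macaulay complex with $\widetilde{H}_1\cong K$. Each of its vertex links is a link of a set of size $\alpha(G)-1$, hence $0$-dimensional with $\widetilde{H}_0\cong K$, i.e.\ two points; thus every vertex of $\b{G_F}$ has degree $2$, which forces $\b{G_F}$ to be a single cycle. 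Its length is at least $4$ because $\alpha(G_F)=\omega(\b{G_F})=2$ excludes a triangle. This yields (iii).

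The converse is the main obstacle: I must propagate (ii) and (iii) to the equality $\widetilde{\chi}(\link_\Delta F)=(-1)^{\dim\link_\Delta F}$ at \emph{all} intermediate faces. The structural input is that (iii) makes every codimension-$2$ link a cycle, and such a link is genuinely Gorenstein$^*$, so it and all of its own sublinks already satisfy the required equality. I would then run a downward induction on the codimension $\alpha(G)-|F|$, controlling $\widetilde{\chi}(\link_\Delta F)$ through the independence-polynomial recursion $I(H,x)=I(H\sm v,x)+x\,I(H\sm\N[v],x)$ evaluated at $x=-1$, which expresses the Euler characteristic of a link in terms of those of a deletion and of a strictly smaller-codimension link, the bottom values coming from (iii) and the top one from (ii); concisely, this is the standard fact that a Cohen--Macaulay complex is Gorenstein$^*$ \ifof it is Cohen--Macaulay and Eulerian. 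The genuine technical difficulty is that the recursion introduces vertex-deletions $G_F\sm v$ that are not themselves of the form $G_{F'}$, so the induction must be arranged with care; this is where I expect the real work to lie. Finally, the ``in particular'' clause is the specialization $\alpha(G)=2$: then (iii) ranges only over $F=\tohi$, saying exactly that $\b G$ is a cycle of length at least $4$, i.e.\ $G$ is the complement of a cycle; for such $G$ the complex $\Delta(\b G)$ is that cycle viewed as a $1$-dimensional complex, which is connected (hence Cohen--Macaulay) with $\widetilde{\chi}=-1$, so (i) and (ii) hold automatically, and the three conditions collapse to ``$G$ is the complement of a cycle''.
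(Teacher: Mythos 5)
Your overall route is exactly the one the paper intends: the paper contains no argument of its own for this theorem beyond invoking Theorem~5.1 of \cite{stanley} for $\Delta(\b G)$ and deferring ``the details'' to \cite{alpha=3}. Within that framework, your cone-point reduction (core $=\Delta(\b G)$ since $G$ has no isolated vertices), the reformulation ``Gorenstein over $K$ $\iff$ CM over $K$ plus $\widetilde{\chi}(\link_\Delta F)=(-1)^{\dim\link_\Delta F}$ for all faces $F$'', the bookkeeping $\widetilde{\chi}(\Delta(\b{G_F}))=-I(G_F,-1)$, the forward implication (including why well-coveredness forces $\alpha(G_F)=2$ and hence length at least $4$), and the $\alpha(G)=2$ clause are all correct.

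The genuine gap is the one you flagged yourself and then left open: in the converse you must establish $I(G_F,-1)=(-1)^{\alpha(G_F)}$ for the \emph{intermediate} independent sets, $1\le|F|\le\alpha(G)-3$, since (ii) covers only $F=\tohi$ and (iii) only $|F|\ge\alpha(G)-2$; your proposal offers a plan but no argument there, and the plan as stated cannot work. Evaluating $I(G,x)=I(G\sm v,x)+xI(G\sm\N[v],x)$ at $x=-1$ gives $I(G,-1)=I(G\sm v,-1)-I(G_v,-1)$: this is one equation in two unknowns, since the hypotheses say nothing about the deletion $G\sm v$ (it is not of the form $G_{F'}$). Indeed, producing the needed value $I(G_v,-1)=(-1)^{\alpha(G)-1}$ is \emph{equivalent}, given (ii), to producing $I(G\sm v,-1)=0$, which by the Mayer--Vietoris relation $\widetilde{\chi}(\Delta\cap\{F:v\notin F\})=\widetilde{\chi}(\Delta)+\widetilde{\chi}(\link_\Delta v)$ is just a restatement of the claim being proved; so the recursion is circular, not merely delicate. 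Closing the gap requires input of a different nature, which is precisely what \cite{alpha=3} supplies. For example, when $\alpha(G)=3$, conditions (i) and (iii) say $\Delta(\b G)$ is a connected pure $2$-complex all of whose vertex links are circles, i.e.\ a closed surface, and (ii) says $\chi=2$, forcing a sphere; one step up, if $\dim\Delta=3$ and every edge link is a circle, one has the counting identity $2\chi(\Delta)=\sum_v\big(2-\chi(\link_\Delta v)\big)$, so $\chi(\Delta)=0$ forces every vertex link (a closed surface) to be a $2$-sphere. It is Dehn--Sommerville/pseudomanifold identities of this kind, in the spirit of Klee's theorem on semi-Eulerian complexes, that propagate the Euler-characteristic condition from the two extremes to the intermediate faces. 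As written, your proposal proves the forward implication and the $\alpha(G)=2$ case, but not the converse.
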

%
%

The above results can be applied to see that there are few Gorenstein graphs in many well-known classes of graphs. For
example, let $G=K_n$ (the complete graph on $n$ vertices) for $n\geq 2$. Then $I(G,x)=1+nx$ and $\alpha(G)=-1$. Thus by
\pref{goren graphs}, $G$ is Gorenstein \ifof $n=2$. Therefore, the only Gorenstein complete graphs are $K_2$ and $K_1$.
Also it follows from \cite[Corollary 9.1.14]{hibi} and \cite{very well} that every CM bipartite graph and every CM very
well-covered graph (that is, a well-covered graph $G$ with $|\V(G)|= 2\alpha(G)$) has a vertex of degree 1, hence by
\pref{W2}, we see that the only connected Gorenstein bipartite or very well-covered graphs are $K_2$ and $K_1$. Also if
$G$ is  an $n$-cycle for $n\geq 3$, then $G$ is W$_2$ only if $n=3$ or $n=5$ and it follows from \pref{goren graphs},
that the only Gorenstein cycle is $C_5$. In the next section, we search for Gorenstein graphs in two other classes of
graphs, namely, \sqc\ graphs and circulant graphs.

                    \section{Gorenstein circulant and \sqc\ graphs}
In this section, we recall the definition of circulant and \sqc\ graphs and characterize Gorenstein \sqc\ graphs and
Gorenstein circulant graphs with certain conditions. First, we consider circulant graphs. Suppose that $n\geq 3$ and
$S\se \{1,\ldots,\floor{n/2}\}$. Then the \emph{circulant graph} $C_n(S)$ is the graph with vertex set $[n]$ such that
${i,j}$ is an edge of $C_n(S)$ if and only if $\min\{|i-j|, n-|i-j|\} \in S$. In other words, assuming that $i>j$, the
vertices $i$ and $j$ are adjacent \ifof $i-j\in S\cup (n-S)$, where by $n-S$ we mean $\{n-s|s\in S\}$. In
\cite{circulant}, well-covered circulant graphs are studied and in \cite{CM circulant}, CM circulant graphs with degree
3 and CM circulant graphs of the form $C_n(1,\ldots, d)$ are characterized. Here we characterize Gorenstein circulant
graphs with degree $\leq 4$ and Gorenstein circulant graphs of the form $C_n(1,\ldots, d)$.

\begin{thm}\label{Cn(1..d)}
Assume that $n\geq 3$, $d\leq \floor{n/2}$ and $K$ is an arbitrary field and let $G=C_n(1,\ldots, d)$. Then $G$ is
Gorenstein (over $K$) \ifof $n=2d+3$.
\end{thm}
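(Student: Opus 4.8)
The plan is to compute the independence number of $G=C_n(1,\ldots,d)$ and then split the analysis according to its value, using Theorem \pref{goren graphs} throughout. Since each vertex is adjacent to its $d$ nearest neighbours on either side along the cycle, a subset of $[n]$ is independent exactly when the cyclic gaps between consecutive chosen vertices are all at least $d+1$; hence $\alpha(G)=\floor{n/(d+1)}$, and in particular $n\geq \alpha(G)(d+1)$. Note also that every vertex has degree at least $2$, so $G$ has no isolated vertices and Theorem \pref{goren graphs} applies.

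For the forward implication, suppose $n=2d+3$. Then $\alpha(G)=\floor{(2d+3)/(d+1)}=2$, and two vertices are non-adjacent in $G$ exactly when their cyclic distance equals $d+1$ (the only distance exceeding $d$ and at most $\floor{n/2}=d+1$); thus $\b G=C_n(d+1)$. Since $n=2(d+1)+1$ gives $\gcd(d+1,n)=1$, the graph $C_n(d+1)$ is a single $n$-cycle. Hence $G$ is the complement of a cycle, and the ``in particular'' clause of Theorem \pref{goren graphs} yields that $G$ is Gorenstein.

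For the converse, assume $G$ is Gorenstein and consider the possible values of $\alpha(G)$. If $\alpha(G)=1$ then $G=K_n$ with $n\geq 3$, which is not Gorenstein, so this case does not occur. If $\alpha(G)=2$, then by Theorem \pref{goren graphs} the graph $\b G$ must be a cycle, hence $2$-regular; since $\alpha(G)\geq 2$ forces $n\geq 2d+2>2d$, every vertex of $G$ has exactly $2d$ neighbours, so $\deg_{\b G}v=n-1-2d$ for all $v$, and $2$-regularity gives $n-1-2d=2$, i.e.\ $n=2d+3$, as required.

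The main work, and the step I expect to be the real obstacle, is ruling out $\alpha(G)\geq 3$. Here I would exploit the link condition of Theorem \pref{goren graphs}: if $G$ is Gorenstein, then $\b{G_F}$ must be a cycle of length $\geq 4$ for every independent set $F$ with $|F|=\alpha(G)-2$. I would take $F=\{0,d+1,2(d+1),\ldots,(\alpha(G)-3)(d+1)\}$, which is independent of size $\alpha(G)-2$; its closed neighbourhood $\N[F]$ is a single arc of length $(\alpha(G)-3)(d+1)+2d+1$, leaving one surviving arc of length $L=n-(\alpha(G)-3)(d+1)-2d-1$. From $n\geq \alpha(G)(d+1)$ one gets $L\geq d+2>0$, and because the deleted arc has length $\geq 2d+1>d$ the two ends of the surviving arc are non-adjacent, so no ``wrap-around'' edges survive and $G_F$ is exactly the $d$-th power of a path on $L$ vertices (two arc-vertices adjacent iff their distance along the arc is at most $d$). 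It then remains to verify that the complement of such a path power is never a cycle: labelling the arc $1,\ldots,L$, the first two vertices have $\b{G_F}$-degrees $\max(0,L-d-1)$ and $\max(0,L-d-2)$, which differ since $L\geq d+2$, so $\b{G_F}$ is not regular and hence not a cycle. This contradiction forces $\alpha(G)\leq 2$ and completes the proof. The delicate points I expect to check carefully are that $G_F$ really is this path power and that the endpoint degree computation is valid across all admissible values of $L$.
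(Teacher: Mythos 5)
Your proof is correct and takes essentially the same approach as the paper: both apply Theorem \pref{goren graphs}, settle the cases $\alpha(G)\leq 2$ via the complement-of-a-cycle criterion, and rule out $\alpha(G)\geq 3$ by exhibiting an explicit independent set $F$ of size $\alpha(G)-2$ for which $G_F$ is an induced arc (a $d$-th power of a path) whose complement cannot be a cycle. The differences are only bookkeeping: you compute $\alpha(G)=\floor{n/(d+1)}$ directly instead of invoking well-coveredness as the paper does, and you get the contradiction from the two distinct endpoint degrees of $\b{G_F}$, whereas the paper counts the non-neighbours of the wrap-around vertex in its chosen arc.
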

\begin{proof}
If $\alpha(G)=1$, then $G$ is a complete graph. But no complete graph on more than two vertices is Gorenstein. So
$\alpha(G)\geq 2$. Note that each vertex of $G$ is adjacent exactly to the (cyclically) next and  previous $d$
vertices. Thus the set $A_t=\{1,d+2,2d+3, \ldots, 1+t(d+1)\}$ is an independent set of $G$, if $1+t(d+1)\leq n-d$ and
is a maximal independent set if $t=t_0=\floor{(n-d-1)/(d+1)}$. Since $G$ is well-covered, $t_0=\alpha(G)$. Assume that
$\alpha(G)\geq 3$. Set $x=1+t_0(d+1)$ and $F=A_{t_0}\sm\{1,x\}$. Then by \pref{goren graphs} $\b{G_F}$ is a cycle, that
is each vertex of $G_F$ is adjacent to all except two other vertices of $G_F$. Note that $G_F$ is the induced subgraph
of $G$ on vertices $x,x+1, \ldots, n, 1$. Since the vertices of $G_F$ which are not adjacent to 1 are $x, x+1, \ldots,
n-d$, we conclude that $n-d=x+1$ (see \pref{fig-Cn}). But then $x+1$ is adjacent to all vertices of $G_F$, except 1
which is a contradiction. Consequently, $\alpha(G)=2$ and by \pref{goren graphs}, $\bar{G}$ is a cycle. Thus degree of
each vertex of $G$ which is $2d$ must equal $n-3$, that is, $n=2d+3$ as required. Conversely, if $n=2d+3$, then $G$ is
the complement of cycle and hence Gorenstein.
\end{proof}

\begin{figure}[!ht]
\begin{center}
\includegraphics{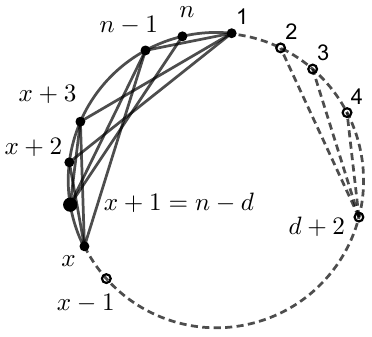}
\end{center}
\caption{The graphs $G$ and $G_F$ in the proof of \pref{Cn(1..d)}; the dashed part is not in $G_F$} \label{fig-Cn}
\end{figure}

Now we are going to characterize Gorenstein circulant graphs with vertex degree $\leq 4$. Note that  the only circulant
graphs with degree $1$ are $C_{2n}(n)$ which are disjoint unions of a set of edges and hence are Gorenstein. Also
connected circulant graphs with vertex degree $2$ are exactly the cycles $C_n$ which are Gorenstein \ifof $n=5$. In
\cite{CM circulant}, CM graphs with vertex degree 3 are characterized. Using their results we get the following. Note
that Gorenstein graphs with vertex degree $3$ are $C_{2n}(a,n)$ for some $1\leq a<n$.

\begin{thm}\label{cubic Goren circulant}
The circulant graph $G=C_{2n}(a,n)$ with $1\leq a<n$ is Gorenstein \ifof $2n/\gcd(a,2n)=3$. In this case, $G$ is
isomorphic to a disjoint copies of complement of a 6-cycle.
\end{thm}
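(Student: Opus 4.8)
The plan is to exploit the standard fact that every Gorenstein ring is Cohen--Macaulay, so a Gorenstein graph is in particular CM, and then to feed ``$G$ is CM'' into the classification of CM cubic circulant graphs of \cite{CM circulant}. Since circulant graphs are vertex-transitive, all connected components of $G=C_{2n}(a,n)$ are isomorphic; writing $g=\gcd(a,n)$, the component through a fixed vertex is the \emph{connected} cubic circulant $C_{2n/g}(a/g,n/g)$, where $\gcd(a/g,n/g)=1$. Because the Stanley--Reisner ring of a disjoint union of graphs on disjoint vertex sets is the tensor product of the rings of the pieces, $G$ is Gorenstein \ifof every component is, and likewise for the CM property; moreover a one-line computation with $\gcd(a,2n)=g\cdot\gcd(a/g,2n/g)$ shows that the ratio $r=2n/\gcd(2n,a)$ is unchanged on passing to a component. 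Hence I may assume $G$ is connected, so that $\gcd(a,n)=1$ and therefore $\gcd(a,2n)\in\{1,2\}$. For the converse, $r=3$ together with $\gcd(a,2n)\in\{1,2\}$ forces $2n=6$ and $a=2$, i.e. $G=C_6(2,3)$, which is the triangular prism $\b{C_6}$; as $\alpha(\b{C_6})=2$ and $\b{C_6}$ is the complement of a cycle, \pref{goren graphs} shows it is Gorenstein, and a disjoint union of copies of it is again Gorenstein --- this also yields the asserted isomorphism type.

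For the forward direction I keep $G$ connected and Gorenstein, so $G$ is CM. By the classification in \cite{CM circulant}, a connected cubic circulant graph is CM only when it is isomorphic to $K_4=C_4(1,2)$ or to $\b{C_6}=C_6(2,3)$, corresponding to $r=4$ and $r=3$ respectively; all other connected cubic circulants --- the M\"obius ladders $C_{2m}(1,m)$ (for instance $C_6(1,3)\cong K_{3,3}$, which even fails to be CM since it is bipartite without a vertex of degree $1$) and the larger prisms $C_{2m}(2,m)$ with $m\geq5$ odd --- are excluded. Since $K_4$ is a complete graph on more than two vertices, it is not Gorenstein, so $G\cong\b{C_6}$ and therefore $r=3$, as claimed.

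The crux, and the only genuinely nontrivial input, is the CM classification of \cite{CM circulant}: its substance is precisely that no connected cubic circulant besides $K_4$ and $\b{C_6}$ is Cohen--Macaulay. If one wanted a self-contained argument avoiding that result, the natural route is to argue directly from \pref{goren graphs} after the reduction to the connected case. When $\alpha(G)=2$ the graph must be the complement of a $2m$-cycle, and equating the resulting vertex degree $2m-3$ with $3$ gives $G\cong\b{C_6}$. The hard part is the case $\alpha(G)\geq3$: there one would choose an independent set $F$ of size $\alpha(G)-2$ and contradict the requirement of \pref{goren graphs} that $\b{G_F}$ be a cycle, in the spirit of the proof of \pref{Cn(1..d)}. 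Carrying this out uniformly across all connected cubic circulants is the delicate point, which is exactly why routing the forward direction through \cite{CM circulant} is the cleaner option.
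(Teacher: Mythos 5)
Your proof is correct and takes essentially the same route as the paper: both deduce CM-ness from Gorenstein-ness, invoke the classification of Cohen--Macaulay cubic circulants in \cite{CM circulant} to reduce to $K_4$ and $\b{C_6}$, rule out $K_4$ as a complete graph on more than two vertices, and confirm $\b{C_6}$ is Gorenstein via \pref{goren graphs}. The only difference is bookkeeping: the paper applies Theorems 5.5 and 5.3 of \cite{CM circulant} directly to the possibly disconnected graph, whereas you first reduce to connected components by hand and then apply the classification there.
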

\begin{proof}
Let $t=\gcd(a,2n)$ and assume that $G$ is Gorenstein. Since $G$ is CM and according to \cite[Theorem 5.5]{CM
circulant}, we get that $2n/t=3$ or $2n/t=4$. If $2n/t=4$, then by \cite[Theorem 5.3]{CM circulant}, $G$ is isomorphic
to $t$ copies of $C_4(1,2)=K_4$ which is not Gorenstein. So $2n/t=3$. Conversely, if $2n/t=3$, then by \cite[Theorem
5.3]{CM circulant}, $G$ is isomorphic to $t/2$ copies of $C_6(2,3)=\b{C_6}$ which is Gorenstein according to
\pref{goren graphs}.
\end{proof}


It remains to characterize circulant graphs with vertex degree equal to 4. These graphs are $C_n(a,b)$ with $1\leq
a<b<n/2$. For this, we need several lemmas. In the sequel, we always assume that $1\leq a<b<n/2$. It should be
mentioned that in what follows we have checked that some specific graphs are Gorenstein or W$_2$ or \ldots, using the
computer algebra system Macaulay 2 \cite{M2}.

\begin{lem}\label{W2lem}
Suppose that $B$ is an independent set of $G$ and $v\in B$. If $B\cap \N_G(x)\neq \{v\}$ for each $x\in \N_G(v)$, then
$G$ is not W$_2$.
\end{lem}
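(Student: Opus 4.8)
The plan is to argue by contradiction: assume $G$ is W$_2$ and produce a pair of disjoint independent sets that cannot be completed to two disjoint maximum independent sets, contradicting the definition. The natural pair to try is $\{v\}$ and $B\sm\{v\}$. These are disjoint independent sets, since both are subsets of the independent set $B$ and $v\notin B\sm\{v\}$.

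First I would feed this pair into the W$_2$ hypothesis, obtaining disjoint maximum independent sets $M_1\supseteq\{v\}$ and $M_2\supseteq B\sm\{v\}$ with $M_1\cap M_2=\tohi$. Since $v\in M_1$ and $M_1\cap M_2=\tohi$, we get $v\notin M_2$. A maximum independent set is maximal, so $M_2\cup\{v\}$ fails to be independent; hence $v$ has a neighbour $x\in M_2$, i.e. $x\in \N_G(v)\cap M_2$.

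Next I would unpack the hypothesis at this particular $x$. Because $x$ is adjacent to $v$ and $v\in B$, we always have $v\in B\cap \N_G(x)$, so the assumption $B\cap \N_G(x)\neq\{v\}$ yields some $w\in B$ with $w\neq v$ and $w\in \N_G(x)$. Then $w\in B\sm\{v\}\se M_2$ and $x\in M_2$, yet $wx\in \E(G)$, contradicting the independence of $M_2$. This contradiction shows $G$ is not W$_2$.

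The argument is essentially complete once the pair $\{v\}$, $B\sm\{v\}$ is chosen; the only point needing care is the degenerate case $B=\{v\}$. But if $v$ had a neighbour $x$ then $B\cap \N_G(x)=\{v\}$, violating the hypothesis, so under the hypothesis either $B\sm\{v\}\neq\tohi$ (the main case above) or $v$ is isolated, and an isolated vertex lies in every maximal independent set, making $G$ trivially not W$_2$. I expect the only mild obstacle to be recording cleanly that the extension supplied by the W$_2$ property keeps $v$ out of $M_2$, since it is precisely $v\notin M_2$ that forces the offending neighbour $x$ into $M_2$.
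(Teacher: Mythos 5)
Your proof is correct and takes essentially the same route as the paper: apply the W$_2$ property to the disjoint independent pair $\{v\}$ and $B\sm\{v\}$, then use the hypothesis to produce an edge inside the second maximum independent set, contradicting its independence. If anything, your execution is slightly cleaner—the paper first replaces $B$ by a maximum independent set containing it and then splits into cases according to whether the unique vertex of $B_2\sm(B\sm\{v\})$ is adjacent to $v$, whereas you obtain the needed neighbour $x\in M_2$ of $v$ directly from the maximality of $M_2$, avoiding both the enlargement and the case analysis, and you also treat the degenerate case $B=\{v\}$ explicitly (where the hypothesis forces $v$ to be isolated).
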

\begin{proof}
Suppose that $G$ is W$_2$. By replacing $B$ with a maximum independent set of $G$ containing $B$, we can assume that
$B$ is an independent set of size $\alpha(G)$. Let $A_1=\{v\}$ and $A_2=B\sm \{v\}$. Since $G$ is W$_2$, there are
disjoint maximum independent sets $B_1$ and $B_2$ containing $A_1$ and $A_2$, respectively. Assume that
$B_2=A_2\cup\{x\}$. As $x\notin B_1$, we have $x\neq v$. If $x$ is not adjacent to $v$, then since $B$ is a maximum
independent set of $G$, $x$ is adjacent to some vertex of $B\sm\{v\}=A_2\se B_2$, which contradicts independency of
$B_2$. Thus $x$ is adjacent to $v$. Hence by assumption, there is a vertex $y\in B\cap \N_G(x)$, with $y\neq v$. Again
$y\in A_2\se B_2$ and we get a contradiction. Consequently, $G$ is not W$_2$.
\end{proof}
\begin{lem}\label{gcd n,a,b=1}
Suppose that $G=C_n(a,b)$ and $d|n$ $d|a$ and $d|b$. Then $G$ is isomorphic to $d$ disjoint copies of
$C_{n/d}(a/d,b/d)$.
\end{lem}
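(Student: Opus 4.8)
The plan is to exhibit an explicit isomorphism by showing that the adjacency relation of $G=C_n(a,b)$ respects the congruence classes modulo $d$, and that on each such class it reproduces the adjacency of $C_{n/d}(a/d,b/d)$.

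First I would identify the vertex set $[n]$ of $G$ with $\z/n\z$, so that $i$ and $j$ are adjacent \ifof $i-j\equiv \pm a$ or $\pm b \pmod n$. The key observation is that since $d\mid a$, $d\mid b$ and $d\mid n$, each of the generators $\pm a,\pm b$ is divisible by $d$; hence whenever $i-j\equiv \pm a,\pm b\pmod n$ we have $d\mid (i-j)$, i.e. $i\equiv j\pmod d$. Therefore no edge of $G$ joins two vertices lying in different residue classes modulo $d$, and $G$ is the disjoint union of the $d$ induced subgraphs on the classes $V_r=\{r+kd \mid 0\le k< n/d\}$ for $r=0,\ldots,d-1$, each of which has exactly $n/d$ vertices.

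Next I would fix $r$ and define $\phi_r\colon V_r\to \z/(n/d)\z$ by $r+kd\mapsto k$. This is clearly a bijection, so it remains to check that it is an isomorphism onto $C_{n/d}(a/d,b/d)$. Two vertices $r+kd$ and $r+\ell d$ of $V_r$ are adjacent in $G$ \ifof $(k-\ell)d\equiv \pm a,\pm b\pmod n$; dividing this congruence through by $d$ (legitimate because $d$ divides $a$, $b$ and $n$) yields $k-\ell\equiv \pm(a/d),\pm(b/d)\pmod{n/d}$, which is precisely the adjacency condition in $C_{n/d}(a/d,b/d)$. Note also that $d\mid a$ with $a\geq 1$ forces $a/d\geq 1$, and $a<b<n/2$ gives $a/d<b/d<(n/d)/2$, so $C_{n/d}(a/d,b/d)$ is a well-defined circulant graph. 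Hence each $\phi_r$ is a graph isomorphism, and $G\cong d\cdot C_{n/d}(a/d,b/d)$ as claimed.

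There is no real obstacle here: once the divisibility observation shows that edges stay inside residue classes, the remaining work is the routine verification that scaling by $1/d$ turns the defining congruence of $C_n(a,b)$ into that of $C_{n/d}(a/d,b/d)$. The only point requiring a moment's care is confirming that $\phi_r$ neither creates nor destroys edges at the ``wrap-around'', which is handled automatically by working throughout in $\z/n\z$ and $\z/(n/d)\z$ rather than with integer representatives in $[n]$.
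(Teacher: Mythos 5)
Your proposal is correct and follows essentially the same route as the paper: both partition the vertices into residue classes modulo $d$, observe that divisibility of $a$, $b$, $n$ by $d$ forces every edge to stay within a class, and then rescale by $1/d$ to identify each induced subgraph with $C_{n/d}(a/d,b/d)$. Your version is slightly more careful than the paper's (working in $\z/n\z$ handles the wrap-around that the paper's absolute-value formulation glosses over), but the argument is the same.
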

\begin{proof}
For each $0\leq i<d$, let $V_i=\{v\in \V(G)| v\equiv i \mod{d}\}$. Note that for each such $i$, we have
$|(kd+i)-(k'd+i)| \in \{a,b\}$ \ifof $|(k-k')|\in \{a/d,b/d\}$. Thus the induced subgraph of $G$ on each $V_i$ is
isomorphic to $C_{n/d}(a/d,b/d)$ and it is clear that there is no edge in $G$ between $V_i$ and $V_j$ for $i\neq j$ and
the result follows.
\end{proof}

Note that when $\gcd(n,a)=1$, then $a$ has a multiplicative inverse modulo $n$, which we denote by $a^{-1}$.
\begin{lem}\label{gcd n,a=1}
If $\gcd(n,a)=1$ (resp. $\gcd(n,b)=1$), then $C_n(a,b)$ is isomorphic to $C_n(1,d)$ with $d=\min\{a^{-1}b \mod{n},
-a^{-1}b \mod{n}\}$ (resp. $d=\min\{b^{-1}a \mod{n}, -b^{-1}a \mod{n}\}$).
\end{lem}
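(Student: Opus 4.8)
The plan is to show that multiplying all vertex labels by $a^{-1}$ gives a graph isomorphism from $C_n(a,b)$ to $C_n(1, a^{-1}b)$, and then to reconcile this with the normalized connection-set convention. The key observation is that the map $\varphi\colon \z/n\z \to \z/n\z$ given by $\varphi(i)= a^{-1}i \bmod n$ is a bijection on the vertex set $[n]$ (identified with $\z/n\z$), precisely because $\gcd(n,a)=1$ makes $a^{-1}$ well-defined and a unit, so $\varphi$ is invertible. First I would verify that $\varphi$ carries edges to edges: in $C_n(a,b)$, two vertices $i,j$ are adjacent exactly when $i-j \equiv \pm a$ or $i-j\equiv \pm b \pmod n$, and applying $\varphi$ gives $\varphi(i)-\varphi(j)\equiv a^{-1}(i-j)\equiv \pm 1$ or $\pm a^{-1}b \pmod n$. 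Since $\varphi$ is a bijection, this shows that $\varphi$ is a graph isomorphism onto the circulant graph whose adjacency is governed by the differences $\pm 1$ and $\pm a^{-1}b$.

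Next I would address the bookkeeping needed to put this in the standard normalized form $C_n(S)$ with $S\se\{1,\ldots,\floor{n/2}\}$. The subtlety is purely notational: the connection set is required to consist of representatives in $\{1,\ldots,\floor{n/2}\}$, so the residue $a^{-1}b \bmod n$ must be replaced by $\min\{a^{-1}b \bmod n, -a^{-1}b \bmod n\} = \min\{a^{-1}b \bmod n, n-(a^{-1}b \bmod n)\}$, which is exactly the quantity $d$ in the statement. Because adjacency depends only on $\min\{|i-j|, n-|i-j|\}$, the difference $a^{-1}b$ and its negative $-a^{-1}b$ define the same edges, so replacing one by the other changes nothing about the graph. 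I would also note in passing that $d\neq 1$: if we had $d=1$ then $a^{-1}b\equiv\pm 1\pmod n$, forcing $b\equiv\pm a$, which is impossible since $1\le a<b<n/2$ gives $0<b-a<n/2$ and $0<a+b<n$, so neither $b-a$ nor $a+b$ is a multiple of $n$. Hence $d$ is a genuine second entry and $C_n(1,d)$ is a legitimate degree-$4$ circulant graph.

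The case $\gcd(n,b)=1$ is entirely symmetric: one uses the map $i\mapsto b^{-1}i \bmod n$ instead, sending the differences $\pm a,\pm b$ to $\pm b^{-1}a, \pm 1$, and then normalizes $b^{-1}a$ to $d=\min\{b^{-1}a \bmod n, -b^{-1}a \bmod n\}$ in the same way. I would present only the first case in full and remark that the second follows by interchanging the roles of $a$ and $b$.

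I do not expect any serious obstacle here; this is essentially the standard fact that a circulant graph is determined by its connection set up to multiplication by a unit of $\z/n\z$ (a Cayley-graph automorphism induced by a group automorphism of $\z/n\z$). The only point requiring a little care is keeping the normalization of the connection set consistent with the paper's convention $S\se\{1,\ldots,\floor{n/2}\}$, which is why the minimum with the negative residue appears in the definition of $d$. The main risk is a sign or inverse slip in tracking the differences through $\varphi$, so I would double-check that $a\cdot a^{-1}\equiv 1$ and that the edge condition is symmetric in $i-j$ versus $j-i$, which it is since $\pm a$ and $\pm b$ are both in the effective connection set.
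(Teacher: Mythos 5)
Your proposal is correct and is essentially the paper's own argument: the paper uses the map $i\mapsto 1+(i-1)a \bmod n$ from $C_n(1,d)$ to $C_n(a,b)$, which is exactly the inverse of your multiplication-by-$a^{-1}$ map, and both proofs rest on the same key fact that multiplication by a unit of $\z/n\z$ is a bijection carrying the connection set $\{\pm a,\pm b\}$ to $\{\pm 1,\pm d\}$. The only cosmetic difference is that the paper concludes via a bijective homomorphism plus equal edge counts, while you track the adjacency equivalence directly (and add the unneeded but harmless observation that $d\neq 1$).
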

\begin{proof}
Assume that $\gcd(n,a)=1$ (the proof for $\gcd(n,b)=1$ is similar). So $a$ has an inverse modulo $n$ and $d$ is
well-defined and the ``$\min$'' in the definition of $d$ guarantees $d< n/2$ . For each $i\in \V(C_n(1,d))$ set
$\phi(i)=1+(i-1)a \mod{n}$ and consider $\phi$ as a map $C_n(1,d)\to C_n(a,b)$. If for $1\leq i,i'\leq n$ we have
$\phi(i)=\phi(i')$, then $i\equiv i' \mod{n}$ (because $a$ is invertible modulo $n$) and hence $i=i'$. Thus $\phi$ is
one-to-one and onto. Also it is easy to verify that $\phi(i)$ and $\phi(i+1)$ and also $\phi(i)$ and $\phi(i+d)$are
adjacent in $C_n(a,b)$ (where we compute $i+1$ or $i+d$ modulo $n$, if $i+1>n$ or $i+d>n$, respectively). Since the two
graphs have the same number of edges, it follows that $\phi$ is an isomorphism.
\end{proof}

\begin{lem}\label{W2Cn1,2}
For $n\geq 4$, the graph $G=C_n(1,2)$ is W$_2$ \ifof $n\in \{4,5,7,8\}$.
\end{lem}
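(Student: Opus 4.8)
The plan is to first record the combinatorics of $G=C_n(1,2)$: a set of vertices is independent exactly when its elements are pairwise at cyclic distance at least $3$, so $G$ is well-covered with $\alpha(G)=\floor{n/3}$, and for $n=4,5$ the graph is complete (being $K_4$ and $K_5$). I would then prove the two implications separately, the forward one by contraposition.

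For the ``only if'' direction I want to show that $G$ fails to be W$_2$ whenever $n=6$ or $n\geq 9$. The idea is to apply \pref{W2lem} to the set $B=\{v-3,v,v+3\}$ (indices mod $n$) with distinguished vertex $v$. The point is that $B$ is independent precisely for these $n$: the inner pairs $\{v-3,v\}$ and $\{v,v+3\}$ always sit at distance $3$, while the outer pair $\{v-3,v+3\}$ sits at cyclic distance $\min\{6,n-6\}$, which is at least $3$ for $n\geq 9$ and collapses $B$ to the independent pair $\{v,v+3\}$ when $n=6$; for $n\in\{7,8\}$ this distance is $1$ or $2$, so $B$ is not independent, and this is exactly what singles out the exceptional values. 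I would then check the hypothesis of \pref{W2lem}: each $x\in\N_G(v)=\{v\pm1,v\pm2\}$ is adjacent to $v-3$ or to $v+3$, so $B\cap\N_G(x)\supsetneq\{v\}$, and the lemma yields that $G$ is not W$_2$. This is a brief finite check once $B$ is in hand.

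For the ``if'' direction I would dispose of the four values one at a time. For $n=4,5$ the graph is complete on at least two vertices, which is W$_2$. For $n=7$ the complement $\b G=C_7(3)$ is a single $7$-cycle and $\alpha(G)=2$, so $G$ is Gorenstein by \pref{goren graphs} and hence W$_2$ by \pref{W2}. The value $n=8$ is the delicate one: here $\b G=C_8(3,4)$ is $3$-regular, hence not a cycle, so $G$ is not Gorenstein and W$_2$ must be verified directly (or with Macaulay 2). Since $\alpha(G)=2$, a pair of disjoint independent sets is trivial to complete unless at least one is a singleton, so it suffices to show that any vertex can be completed to an independent pair avoiding at most two prescribed vertices; as every vertex of $C_8(1,2)$ has exactly three independent partners, a counting argument gives this.

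The main obstacle is the case $n=8$ of the ``if'' direction: unlike $n=7$ it is not Gorenstein, so none of the clean criteria of \pref{goren graphs} or \pref{W2} apply and one is forced into an explicit combinatorial (or computer-assisted) check. The only subtle point in the ``only if'' direction is spotting the obstruction set $B=\{v-3,v,v+3\}$ and observing that its independence breaks down exactly at $n=7,8$, which is what pins the exceptional set down to $\{4,5,7,8\}$.
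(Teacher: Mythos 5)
Your proof is correct, but it takes a genuinely different route from the paper. The paper's proof is short and external: it invokes Brown and Hoshino's classification of well-covered circulant graphs (\cite[Theorem 4.1]{circulant}) to reduce to the finitely many candidates $n\leq 8$ or $n=11$, and then settles each remaining value by direct (computer-assisted) checking, including the separate verification that $C_{11}(1,2)$ is not W$_2$. You instead build a self-contained argument on the paper's own Lemma \pref{W2lem}: the obstruction set $B=\{v-3,v,v+3\}$ is independent precisely when $n=6$ or $n\geq 9$, and every neighbor of $v$ is adjacent to $v-3$ or $v+3$, so Lemma \pref{W2lem} kills all bad values uniformly --- including $n=11$, which the paper must treat as a sporadic case. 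Your treatment of the good values is also explicit where the paper says ``one can check'': $n=4,5$ give complete graphs, $n=7$ is Gorenstein (hence W$_2$ by \pref{W2}) since $\b G\cong C_7$ and $\alpha(G)=2$, and $n=8$ follows from the counting argument using the fact that each vertex has exactly three non-neighbors, which correctly covers all pairs of disjoint independent sets once $\alpha(G)=2$. What your approach buys is independence from the external well-coveredness theorem and from machine verification; what the paper's buys is brevity. One caveat: your opening assertion that $C_n(1,2)$ is well-covered for all $n$ is false (for $n=9$ the set $\{1,5\}$ is maximal independent while $\alpha=3$; well-coveredness in fact holds only for $n\leq 8$ and $n=11$), but this claim is never used in your argument --- Lemma \pref{W2lem} needs no well-coveredness hypothesis, and the four good values are easily seen to have the stated independence numbers --- so the slip is inessential and should simply be deleted.
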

\begin{proof}
Suppose that $G$ is W$_2$ and hence well-covered. By \cite[Theorem 4.1]{circulant} we have $n\leq  8$ or $n=11$. One
can check that for $n=6$ or $11$, $G$ is not W$_2$ and for the other possible $n$'s, $G$ is W$_2$.
\end{proof}

Now we can characterize Gorenstein and W$_2$ circulant graphs of degree 4.
\begin{thm}\label{circulant 4 main}
Suppose that $1\leq a <b <n/2$. The graph $C_n(a,b)$ is Gorenstein (over $K$) \ifof $(n,a,b)\in \ca$ where
\begin{align}
 \ca= \{&(7d,d,2d), (7d,d,3d), (7d,2d,3d), (13d,d,5d), \notag\\
  & (13d,2d,3d),(13d,4d,6d)|d\in \n\}. \notag
\end{align}
Also $C_n(a,b)$ is W$_2$ \ifof
$$(n,a,b)\in \ca\cup\{(5d,d,2d), (8d, d,2d), (8d, 2d, 3d)|d\in \n\}.$$
\end{thm}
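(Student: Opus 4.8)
The plan is to establish both equivalences at once, peeling off parameters until only finitely many connected graphs remain, which are then settled by \pref{goren graphs}, \pref{W2Cn1,2} and direct computation. The first reduction removes the free parameter $d$. Setting $d_0=\gcd(n,a,b)$, \pref{gcd n,a,b=1} identifies $C_n(a,b)$ with the disjoint union of $d_0$ copies of the connected graph $C_{n/d_0}(a/d_0,b/d_0)$. Since independent sets of a disjoint union split across the components, its independence complex is the join of the component independence complexes, so the Stanley--Reisner ring of the union is the tensor product over $K$ of those of the components; hence the union is Gorenstein over $K$ \ifof each component is. Likewise a disjoint union of graphs each on at least two vertices is W$_2$ \ifof each component is (extend and restrict pairs of disjoint independent sets componentwise). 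Thus it suffices to decide both properties for connected $C_n(a,b)$, i.e. for $\gcd(n,a,b)=1$, the answer then scaling by the factor $d$ visible in every member of $\ca$ and of the W$_2$ list.

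Next I would normalize the connected case. If $\gcd(n,a)=1$ or $\gcd(n,b)=1$, then \pref{gcd n,a=1} puts the graph in the form $C_n(1,e)$, and a multiplier map $x\mapsto\lambda x$ with $\gcd(\lambda,n)=1$ normalizes $e$ further. The only connected degree-$4$ circulants not reachable this way have $\gcd(n,a)>1$ and $\gcd(n,b)>1$ (forcing $\gcd(n,a)\gcd(n,b)\mid n$); for these I would show directly that they are not W$_2$, hence not Gorenstein by the first part of \pref{W2}. Concretely, with $v=0$ I would build an independent set $B\ni 0$ for which each of the four neighbors $a,b,n-a,n-b$ of $0$ acquires a second neighbor in $B$, so that \pref{W2lem} applies; the coprimality defect leaves enough room in one period to place such a $B$.

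The heart of the matter is to bound the parameters of $C_n(1,e)$. Since W$_2$ forces well-coveredness, I would use the well-covered analysis of \cite{circulant} (as in \pref{W2Cn1,2} for $e=2$) together with \pref{W2lem} to eliminate all but finitely many $(n,e)$: for parameters outside a small window I expect to exhibit a periodic independent set $B$ and a vertex $v\in B$ all of whose neighbors are covered twice by $B$, so the graph fails to be W$_2$. After this, everything reduces to connected graphs on $n\le 13$ vertices; enumerating the admissible $(a,b)$ for each such $n$ and testing W$_2$ (via \pref{W2Cn1,2}, the multiplier normalization, or Macaulay~2 \cite{M2}) should yield exactly the triples $(5,1,2),(7,1,2),(7,1,3),(7,2,3),(8,1,2),(8,2,3),(13,1,5),(13,2,3),(13,4,6)$, which is the W$_2$ list after scaling by $d$.

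Finally I would read off the Gorenstein list. By the first part of \pref{W2} a Gorenstein degree-$4$ circulant is W$_2$, so it lies among the nine connected triples above. For $n\in\{5,7,8\}$ one has $\alpha(G)=2$, so \pref{goren graphs} reduces Gorensteinness to whether $\b G$ is a single cycle: this holds for $n=7$ (where $\b{C_7(1,2)}$ is a $7$-cycle) but fails for $K_5=C_5(1,2)$ (complete) and for $C_8(1,2)$, $C_8(2,3)$. The remaining graph $C_{13}(1,5)$ has $\alpha(G)=3$ and is checked against the full link condition of \pref{goren graphs} in Macaulay~2. Exactly the six families of $\ca$ survive, and by the disjoint-union reduction they stay Gorenstein after scaling by $d$. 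The main obstacle is the uniform elimination step for $C_n(1,e)$: since $C_{13}(1,5)$ is W$_2$, no crude bound on $n$ alone suffices, and one must construct a \pref{W2lem}-witness $(B,v)$ whose placement depends on the arithmetic of $e$ modulo $n$.
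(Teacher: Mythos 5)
Your two reductions at the ends are sound and agree with the paper: the disjoint-union/tensor-product argument matches the role of \pref{gcd n,a,b=1}, and your endgame (the $\alpha=2$ cases via \pref{goren graphs}, the $n=13$ graphs via triangle-freeness or Macaulay 2) is essentially what the paper does. But the middle step --- the one that makes an infinite family finite --- is missing, and you concede as much when you write that you ``expect to exhibit'' a suitable witness and that ``one must construct a \pref{W2lem}-witness $(B,v)$ whose placement depends on the arithmetic of $e$ modulo $n$.'' That construction is precisely the content of the paper's proof, so what you have is a frame around a hole. The paper fills it as follows: with $\gcd(n,a,b)=1$ and \emph{before} any normalization to $C_n(1,e)$, it takes the explicit set $B=\{1,\,1+a+b,\,n-a-b+1\}$; every neighbor of the vertex $1$ has a neighbor in $B\setminus\{1\}$, so by \pref{W2lem} the set $B$ cannot be independent if $G$ is W$_2$, and checking which adjacencies inside $B$ are possible forces one of six linear relations: $n=2a+b$, $n=a+2b$, $n=2a+3b$, $n=3a+2b$, $2n=2a+3b$, $2n=3a+2b$. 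Each relation is then attacked with a further tailor-made witness set (for instance $\{1,\,1+2a,\,1+b-a,\,n-a-b+1\}$ when $n=2a+3b$), whose forced non-independence yields equations such as $b=2a$ or $2b=3a$, and these pin $(n,a,b)$ down to the finite list; in the case $2n=3a+2b$ the witness is provably independent, so that case cannot occur at all. Note also that your appeal to the well-covered classification of \cite{circulant} cannot replace this: the paper uses only its Theorem 4.1, which concerns $C_n(1,2)$, and no characterization of well-covered $C_n(1,e)$ for general $e$ is invoked or available there.

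A second, smaller gap is your handling of the connected graphs with $\gcd(n,a)>1$ and $\gcd(n,b)>1$, which you defer to an unspecified direct argument (``the coprimality defect leaves enough room\dots''). The paper's order of operations dissolves this case automatically: it derives the linear relations first, and each relation itself supplies the coprimality needed for \pref{gcd n,a=1} --- for example, if $n=a+2b$ then $\gcd(n,b)$ divides $a$, hence $\gcd(n,b)=\gcd(n,a,b)=1$ and $G\cong C_n(1,2)$. Normalizing first, as you propose, creates an extra case that you would then have to close by hand; deriving the relations first makes it vanish.
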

\begin{proof}
Assume that $G$ is W$_2$. By using \pref{gcd n,a,b=1}, we assume that $\gcd(n,a,b)=1$ (which corresponds to $d=1$).
First we prove the following claim.

\paragraph{\emph{Claim}.} One of the following relations hold: $n=2a+b$ or $n=a+2b$ or $n=2a+3b$ or $n=3a+2b$ or
$2n=2a+3b$ or $2n=3a+2b$.

\subparagraph{\it Proof of Claim.} Let $B=\{1,1+a+b,n-a- b+1\}$. Note that vertices $1+a$ and $1+b$ are adjacent to
$1+a+b$ and vertices $n-a+1$ and $n-b+1$ are adjacent to $n-a-b+1$, thus every neighbor of $1$ has a neighbor in
$B\sm\{1\}$. If $B$ is independent, then by \pref{W2lem} with $v=1$, we deduce that $G$ is not W$_2$. Therefore $B$ is
not independent. Suppose that $1+a+b\in \N(1)$. Then as $1+a+b>1$, we should have either $a+b\in \{a,b\}$ (which is not
possible) or $a+b\in \{n-a,n-b\}$. The latter condition is equivalent to either $n=2a+b$ or $n=a+2b$. Similarly if
$n-a-b+1\in \N(1)$, then again either $n=2a+b$ or $n=a+2b$.

Now assume that $1+a+b\in \N(n-a-b+1)$. If $1+a+b<n-a-b+1$, then $n-2a-2b\in \{a,b,n-a,n-b\}$ which implies that either
$n=2a+3b$ or $n=3a+2b$. Finally if $n-a-b+1<1+a+b$, then we must have $2a+2b-n\in \{a,b,n-a,n-b\}$, that is, either
$n=a+2b$ or $n=2a+b$ or $2n=3a+2b$ or $2n=2a+3b$. This concludes the proof of the claim. Now we consider several cases.

\paragraph{\emph{Case 1:} \it $n=2a+b$ or $n=a+2b$.} Assume that $n=a+2b$ and let $d=\gcd(n,b)$. Then $d|a$ and hence
$d=\gcd(n,a,b)=1$. Note that $a\equiv 2b \mod{n}$ and hence $b^{-1}a \equiv 2 \mod{n}$. Thus according to \pref{gcd
n,a=1}, $G$ is isomorphic to $C_n(1,2)$. Also if $n=2a+b$, a similar argument shows that $G\cong C_n(1,2)$. Note that
since degree of each vertex in $G$ is 4, $n\geq 5$. Therefore, it follows from \pref{W2Cn1,2}, that $n\in\{5,7,8\}$.
Consequently, in this case $G$ is W$_2$ \ifof $(n,a,b)$ equals one of the following: $(5,1,2)$, $(7,1,3)$, $(7,2,3)$ or
$(8,2,3)$. Moreover, according to \pref{Cn(1..d)}, in this case $G$ is Gorenstein \ifof $n=7$.

\paragraph{\emph{Case 2:} \it $n=2a+3b$.} Consider $B=\{1,1+2a, 1+b-a, n-a-b+1\}$. Every neighbor of 1 has a neighbor
in $B\sm\{1\}$. Hence by \pref{W2lem}, $B$ is not independent. If $1+2a \in \N(1)$, then $2a\in \{a,b,n-a,n-b\}$. The
only possible case is that $2a=b$ (for example, if $2a=n-a$, then by replacing $n$ with $2a+3b$, it follows that
$a=3b$, which contradicts $a<b$). But then $n=8a$ and hence $a=\gcd(n,a,b)=1$, that is, $(n,a,b)=(8,1,2)$. Note that
$C_8(1,2)$ is W$_2$ but not Gorenstein, according to \pref{Cn(1..d)} and \pref{W2Cn1,2}. If $1+b-a\in \N(1)$, then
again the only possible case is $b-a=a$ which leads to $(n,a,b)=(8,1,2)$ as above. Also note that if $n-a-b+1\in
\N(1)$, then by the proof of the claim above, $n=2a+b$ or $n=a+2b$ which contradicts the assumption of this case.

Now assume $1+2a$ and $1+b-a$ are adjacent. If $1+b-a> 1+2a$, then we have $b-3a\in\{a,b,n-a,n-b\}$. But the only
possible case is $b-3a=a$, or equivalently $b=4a$. Similar to the case $b=2a$, we see that $a=\gcd(n,a,b)$=1 and
$(n,a,b)=(14,1,4)$. But $C_{14}(1,4)$ is not W$_2$. If $1+b-a<1+2a$, then we have $3a-b\in \{a,b,n-a,n-b\}$. It follows
that either $b=2a$ or $2b=3a$. The former case was dealt above. In the latter case, we have $b=3t$ for some integer $t$
and $a=2t$, hence $n=13t$ and $t=\gcd(n,a,b)=1$, that is, $(n,a,b)=(13,2,3)$. Using Macaulay 2, we see that
$C_{13}(2,3)$ is W$_2$. Also no two neighbors of 1 in $C_{13}(2,3)$ are adjacent, which means, 1 is not in any triangle
of $C_{13}(2,3)$. But circulant graphs are vertex transitive and hence $C_{13}(2,3)$ is triangle-free and Gorenstein by
\pref{W2}\pref{tri-free Goren}.

Finally, noting that $\max\{1+2a, 1+b-a\}< n-b-a+1= a+2b+1$, we see that $\{1+2a,1+b-a\} \cap \N(n-b-a+1)\neq \tohi$
\ifof $2a+b$ or $2b-a$ are in $\{a,b,n-a,n-b\}$. But all of these cases lead to contradictions with $1\leq a<b<n/2$ and
are not possible. Consequently, under the assumption of case 2, $G$ is W$_2$ \ifof $(n,a,b)$ is $(8,1,2)$ or $(13,2,3)$
and only in the latter case $G$ is Gorenstein.

\paragraph{\emph{Case 3:} \it $n=3a+2b$.} Consider $B=\{1, 1+b-a, 1+b+a, n-b+a+1\}$. By \pref{W2lem}, $B$ is not
independent. Note that $v_1=1< v_2=1+b-a< v_3=1+b+a<v_4=n-b+a+1=1+4a+b$. Thus there are some $i$ and $j$ such that
$1\leq i<j\leq 4$ and  $v_j-v_i\in \{a,b,n-a,n-b\}$. Writing down all of these equations for $1\leq i<j\leq 4$ and
ruling out those which contradict $1\leq a<b<n/2$, we deduce that either $b=2a$ or $b=3a$ or $b=5a$ or $2b=3a$. Similar
to the above argument, in the first three cases, it follows that $a=\gcd(n,a,b)=1$ and in the last case
$a/2=b/3=\gcd(n,a,b)=1$. Therefore, $(n,a,b)$ is one of the following: $(7,1,2)$ or $(9,1,3)$ or $(13,1,5)$ or
$(12,2,3)$. Using Macaulay 2, we see that $G$ is W$_2$ only in the cases $(n,a,b)\in \{(7,1,2), (13,1,5)\}$. Also
$C_7(1,2)$ is Gorenstein by \pref{Cn(1..d)} and $C_{13}(1,5)$ is Gorenstein, because it is triangle-free and W$_2$.

\paragraph{\emph{Case 4:} \it $2n=2a+3b$.} First note that in this case $b$ is even and we have $b<2a$ (else
$n\leq 3b/2+b/2= 2b$, contradicting $b<n/2$). Let $B=\{1, 1+b-a, n-b-a+1=1+b/2, 1+2a\}$. By \pref{W2lem}, $B$ is not
independent and hence the difference of two of the elements of $B$ should be in $\{a,b,n-b,n-a\}$. The equations that
can be derived from this point and not contradicting $1\leq a<b<\min\{n/2,2a\}$ are: $3a=2b$, $4a=3b$ or $6a=5b$.
Assume that $3a=2b$, then $a=2t$ and $b=3t$. Since $b$ is even, $t=2t'$ and hence $n=a+3b/2=13t'$. Since
$t'=\gcd(n,a,b)=1$, we deduce that $(n,a,b)=(13,4,6)$. By a similar argument in other cases we have $(n,a,b)=(9,3,4)$
or $(14,5,6)$. Neither $C_{14}(5,6)$ nor $C_9(3,4)$ is W$_2$. Consequently, $G$ is W$_2$ \ifof $(n,a,b)=(13,4,6)$ and
if this is the case, then $G$ is also Gorenstein, because $C_{13}(4,6)$ is triangle-free.

\paragraph{\emph{Case 5:} \it $2n=3a+2b$.} In this case, $a$ is even and $b<3a/2$. Let $B=\{1< 1+b-a< 1+n-b-a= 1+a/2<
1+5a/2\}$. Note that $(1+5a/2)-(1+a)=n-b$ and hence $1+a$ is adjacent to $1+5a/2$. Similarly, each other neighbor of
$1$ is adjacent to an element of $B\sm\{1\}$. If $B$ is not independent, then the difference of two of its members
should be in $\{a,b,n-a,n-b\}$. But all of the equations that follow from this, contradict $1\leq
a<b<\min\{3a/2,n/2\}$. Hence $B$ is independent and by \pref{W2lem}, $G$ is not W$_2$, that is, this case is not
possible. This completes the proof of the theorem.
\end{proof}

We can simplify the above theorem, noting that several of the graphs in that theorem are isomorphic.
\begin{cor}
Let $G$ be a circulant graph with vertex degree $4$. Then $G$ is W$_2$ \ifof $G$ is isomorphic to disjoint copies of
one of the following graphs: $C_7(1,2), C_{13}(1,5), C_5(1,2), C_8(1,2)$. Moreover, $G$ is Gorenstein \ifof it is
isomorphic to a disjoint copies of $C_7(1,2)$ or to a disjoint copies of $C_{13}(1,5)$.
\end{cor}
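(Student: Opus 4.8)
The plan is to derive the corollary directly from Theorem~\ref{circulant 4 main} by exhibiting isomorphisms among the listed graphs, thereby collapsing the set $\ca$ together with the three extra W$_2$ families into the short list $C_7(1,2),\, C_{13}(1,5),\, C_5(1,2),\, C_8(1,2)$. By Lemma~\ref{gcd n,a,b=1}, every circulant graph of degree $4$ is a disjoint union of $d$ copies of a degree-$4$ circulant $C_{n/d}(a/d,b/d)$ with $\gcd(n/d,a/d,b/d)=1$, so it suffices to treat the primitive ($d=1$) graphs appearing in Theorem~\ref{circulant 4 main} and show each is isomorphic to one of the four named graphs.

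First I would group the primitive triples in $\ca$ and in the extra W$_2$ list by their value of $n$. For $n=7$ the triples are $(7,1,2),(7,1,3),(7,2,3)$; for $n=13$ they are $(13,1,5),(13,2,3),(13,4,6)$; for $n=5$ there is $(5,1,2)$; and for $n=8$ there are $(8,1,2),(8,2,3)$. The key step is to invoke Lemma~\ref{gcd n,a=1}: whenever $\gcd(n,a)=1$ or $\gcd(n,b)=1$, the graph $C_n(a,b)$ is isomorphic to some $C_n(1,d)$. For the $n=7$ triples, $7$ is prime so every entry is coprime to $7$, and a short computation of $a^{-1}b \bmod 7$ (or $b^{-1}a\bmod 7$) shows each of $(7,1,3)$ and $(7,2,3)$ reduces to $C_7(1,2)$. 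Likewise for $n=13$, which is prime, I would compute the relevant inverse products modulo $13$ and check that $(13,2,3)$ and $(13,4,6)$ both reduce to $C_{13}(1,5)$. For $n=8$, the triple $(8,2,3)$ has $\gcd(8,3)=1$, so Lemma~\ref{gcd n,a=1} gives $C_8(2,3)\cong C_8(1,d)$ with $d=\min\{3^{-1}\cdot 2,\,-3^{-1}\cdot 2 \bmod 8\}$; computing $3^{-1}\equiv 3\pmod 8$ yields $d=2$, so $(8,2,3)$ reduces to $C_8(1,2)$. The triples $(7,1,2),(13,1,5),(5,1,2),(8,1,2)$ are already in the desired form.

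Once these reductions are recorded, the W$_2$ classification is immediate: the full W$_2$ set from Theorem~\ref{circulant 4 main} consists of disjoint copies of primitive graphs, each of which is isomorphic to one of the four named graphs, giving exactly the stated list. For the Gorenstein half, Theorem~\ref{circulant 4 main} already restricts the primitive Gorenstein triples to those with $n=7$ or $n=13$ (the three triples for each), and the isomorphisms above show these are precisely copies of $C_7(1,2)$ or $C_{13}(1,5)$. I would note that the converse directions---that disjoint copies of these four graphs are indeed W$_2$, and that copies of $C_7(1,2)$ and $C_{13}(1,5)$ are indeed Gorenstein---follow because these properties pass to disjoint unions and were already established in the proof of Theorem~\ref{circulant 4 main}.

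The only genuine content is the isomorphism bookkeeping, and the main obstacle is simply carrying out the modular inverse computations correctly and confirming that the ``$\min$'' in Lemma~\ref{gcd n,a=1} lands on the intended value of $d$ (for instance verifying $d=2$ rather than some other residue for the $n=8$ case, and $d=5$ rather than a competing residue for the $n=13$ cases). Since each such check is a one-line calculation modulo a small prime or modulo $8$, I expect no serious difficulty; the proof is essentially a tabulation justified by Lemmas~\ref{gcd n,a,b=1} and~\ref{gcd n,a=1}.
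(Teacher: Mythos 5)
Your proposal is correct and follows essentially the same route as the paper: the paper's proof likewise invokes Lemma~\ref{gcd n,a=1} to obtain the isomorphisms $C_{13}(2,3)\cong C_{13}(4,6)\cong C_{13}(1,5)$, $C_7(2,3)\cong C_7(1,3)\cong C_7(1,2)$ and $C_8(2,3)\cong C_8(1,2)$, and then cites Theorem~\ref{circulant 4 main} together with Lemma~\ref{gcd n,a,b=1}. Your version merely spells out the modular-inverse computations (all of which check out) that the paper leaves implicit.
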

\begin{proof}
Note that using \pref{gcd n,a=1}, we get $C_{13}(2,3)\cong C_{13}(4,6)\cong C_{13}(1,5)$, $C_7(1,2)\cong C_7(2,3)\cong
C_7(1,3)$ and $C_8(2,3)\cong C_8(1,2)$. Now the result follows from \pref{circulant 4 main} and \pref{gcd n,a,b=1}.
\end{proof}

Suppose that $H$ is a graph and $t\in \n$. By $tH$ we mean the graph consisting of $t$ disjoint copies of $H$.
Summarizing our results on Gorenstein circulant graphs, we get the following.
\begin{cor}
Suppose that $G$ is a circulant graph with vertex degree $\leq 4$ or a circulant graph of the form $C_n(1,\ldots, d)$
for some $d\leq n/2$. Then $G$ is Gorenstein \ifof $G\cong tK_2$, $G\cong t\b{C_n}$ or $G\cong tC_{13}(1,5)$ for some
$t\in \n$ and $4\leq n\in \n$.
\end{cor}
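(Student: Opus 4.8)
The plan is to assemble this statement from the results already established in this section, since each of the two families has been completely analysed; the only work left is to reconcile the various circulant descriptions into the three canonical forms and to account for disjoint unions. Throughout I use the standard fact that, because the Stanley--Reisner ring of a disjoint union is the $K$-algebra tensor product of the rings of its components, a graph without isolated vertices is Gorenstein \ifof each of its connected components is Gorenstein. Since every circulant graph $C_n(S)$ with $S\neq\tohi$ is regular of positive degree, this criterion applies to all graphs under consideration, and in particular lets me pass freely between a single component and $t$ disjoint copies of it.

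For the ``\rgive'' direction I verify that each listed graph is Gorenstein. The edge $K_2$ is Gorenstein; for $n\geq 4$ the cycle $C_n$ is triangle-free, so $\alpha(\b{C_n})=2$ and \pref{goren graphs} shows that $\b{C_n}$, being the complement of a cycle, is Gorenstein; and $C_{13}(1,5)$ was shown to be Gorenstein in the proof of \pref{circulant 4 main} (it is triangle-free and W$_2$, so one applies \pref{W2}\pref{tri-free Goren}). Closing under disjoint unions by the principle above, $tK_2$, $t\b{C_n}$ and $tC_{13}(1,5)$ are all Gorenstein.

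For the ``\give'' direction I split according to the family and, when $G$ has vertex degree at most $4$, according to the common vertex degree. If $G=C_n(1,\ldots,d)$, then \pref{Cn(1..d)} forces $n=2d+3$ and identifies $G$ with $\b{C_n}$, so $G\cong t\b{C_n}$ with $t=1$. If every vertex has degree $1$, then $G=C_{2m}(m)$ is a disjoint union of $m$ edges, i.e. $G\cong tK_2$. If every vertex has degree $2$, then $G=C_n(s)$ is a disjoint union of $\gcd(n,s)$ cycles of the common length $n/\gcd(n,s)$, which by the principle above is Gorenstein exactly when that length equals $5$ (the only Gorenstein cycle), giving $G\cong tC_5=t\b{C_5}$ since $\b{C_5}\cong C_5$. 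If every vertex has degree $3$, then \pref{cubic Goren circulant} yields $G\cong t\b{C_6}$. Finally, if every vertex has degree $4$, then by \pref{circulant 4 main} together with \pref{gcd n,a,b=1} (whose isomorphisms are collected in the Corollary preceding this one) $G$ is a disjoint union of copies of $C_7(1,2)$ or of $C_{13}(1,5)$; and applying \pref{Cn(1..d)} with $d=2$ and $n=7=2\cdot 2+3$ gives $C_7(1,2)\cong \b{C_7}$, so $G\cong t\b{C_7}$ or $G\cong tC_{13}(1,5)$. In every case $G$ has one of the three stated forms, which completes the proof.

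The step requiring genuine care, as opposed to a direct appeal to an earlier theorem, is the disjoint-union reduction: it underlies the degree-$2$ analysis (where the earlier discussion only treated the connected, single-cycle case) and the whole passage from one component to arbitrary $t$ in both directions. Everything else is a mechanical translation of \pref{Cn(1..d)}, \pref{cubic Goren circulant} and \pref{circulant 4 main} through the isomorphisms $C_7(1,2)\cong\b{C_7}$ and $\b{C_5}\cong C_5$, so I expect no further obstacle.
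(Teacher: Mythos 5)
Your proposal is correct and follows essentially the same route as the paper, which gives no explicit proof but intends exactly this summary: degree~$1$ gives $tK_2$, degree~$2$ gives disjoint $5$-cycles, degree~$3$ is \pref{cubic Goren circulant}, degree~$4$ is \pref{circulant 4 main} with its corollary, and $C_n(1,\ldots,d)$ is \pref{Cn(1..d)}. Your only additions are to make explicit what the paper leaves implicit, namely the component-wise (tensor product) reduction and the identifications $C_7(1,2)\cong\b{C_7}$ and $C_5\cong\b{C_5}$, and these are carried out correctly.
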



Next we consider \sqc\ graphs. Our interest in \sqc\ graphs arise form the fact that these graphs are CM and also every
CM graph with girth at least 5 or every CM graphs which does not contain any cycles of length 4 or 5 is in the class
\sqc, see \cite{large girth}. We recall the definition of \sqc\ graphs from \cite{block cactus}. By a \emph{simplicial
vertex} of $G$, we mean a vertex $v$ such that $\N_G[v]$ is complete and in this case we call $\N_G[v]$ a \emph{simplex
of $G$}. Furthermore, a 5-cycle $C$ of $G$ is called a\emph{ basic 5-cycle} when $C$ does not contain two adjacent
vertices both with degree more than 2. Also a 4-cycle $C$ is said to be\emph{ basic}, if it contains two adjacent
vertices of degree 2 and the two other vertices of $C$ each belong to a simplex or to a basic 5-cycle. For a basic
4-cycle $C$ we denote the set of degree 2 vertices of $C$ by $\B(C)$ and call them \emph{basic vertices} of $C$. If
there are simplices $S_1, \ldots, S_m$, basic 5-cycles $C_1, \ldots, C_t$ and basic 4-cycles $Q_1, \ldots, Q_r$ of $G$
such that
\begin{equation}\label{eq1}
\V(G)=\bigcup_{i=1}^m S_i \cup \bigcup_{i=1}^t \V(C_j) \cup \bigcup_{i=1}^r \B(Q_i)\qquad \hfill \tag{$*$}
\end{equation}
is a partition of $\V(G)$, we say that $G$ is in the class \sqc\ (or simply, $G$ is \sqc) and call \pref{eq1} a
\emph{\sqc\ partition} of $\V(G)$. Note that in this case, $G$ is well-covered and has independence number equal to
$m+2t+r$ (see \cite[Theorem 3.1]{block cactus}).

\begin{thm}\label{SQC}
Suppose that $G$ has no isolated vertex and is in the class \sqc. Then $G$ is Gorenstein \ifof each component of $G$ is
either an edge or a 5-cycle.
\end{thm}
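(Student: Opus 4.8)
The plan is to treat the two implications separately, with essentially all of the work in the forward direction. For \rgive, I would argue that a disjoint union of Gorenstein graphs is Gorenstein: if $G=H_1\sqcup H_2$ then $S/I(G)\cong (S_1/I(H_1))\otimes_K(S_2/I(H_2))$, and a tensor product of two affine $K$-algebras is Gorenstein \ifof both factors are. Since $K_2$ is a complete intersection and $C_5=\b{C_5}$ is Gorenstein by \pref{goren graphs}, any graph whose components are all edges or $5$-cycles is Gorenstein. The same tensor identity shows, conversely, that every component of a Gorenstein graph is Gorenstein, and restricting a \sqc\ partition \pref{eq1} to a single component yields a \sqc\ partition of that component; so the \give\ direction reduces to the connected case, where the goal becomes: \emph{a connected Gorenstein \sqc\ graph without isolated vertices is $K_2$ or $C_5$}.

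So let $G$ be connected, Gorenstein and \sqc, and fix a \sqc\ partition. By \pref{W2}\pref{Goren=> W2}, $G$ is W$_2$, and I would first dispose of small independence number. If $\alpha(G)=1$ then $G=K_p$, which is Gorenstein only for $p=2$, giving $G=K_2$. If $\alpha(G)=2$ then \pref{goren graphs} forces $G=\b{C_\ell}$; but for $\ell\geq 6$ the graph $\b{C_\ell}$ has no simplicial vertex (the neighborhood $V\sm\{i-1,i+1\}$ of a vertex $i$ contains the nonadjacent pair $i+2,i+3$) and cannot be covered by basic $4$- and $5$-cycles (a basic $5$-cycle covers only $5$ vertices while contributing $2$ to $\alpha$, and a basic $4$-cycle needs its non-basic vertices inside other simplices or $5$-cycles), so $\b{C_\ell}$ is not \sqc\ for $\ell\geq 6$; hence $\ell=5$ and $G=C_5$. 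It remains to exclude $\alpha(G)\geq 3$, equivalently to show that $G$ is a single piece of its partition.

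The engine for this is the following consequence of \pref{goren graphs}: for every independent set $F$ with $|F|=\alpha(G)-2$ the complement $\b{G_F}$ must be a cycle of length $\geq 4$, and such a cycle has no vertex adjacent to all the others; hence it suffices to produce, for one such $F$, an \emph{isolated} vertex of $G_F$. I would exploit the private degree-$2$ vertices that basic cycles are forced to contain. For example, a basic $4$-cycle $Q=q_1q_2q_3q_4$ with $\B(Q)=\{q_1,q_2\}$ has its non-basic vertices $q_3,q_4$ in other pieces; taking a maximum independent set $M$ with $q_2,q_4\in M$ (note $q_2,q_4$ are opposite, hence nonadjacent) and deleting $q_2$ together with one further element $z\neq q_4$ produces an $F$ with $|F|=\alpha-2$, $q_4\in F$, and $q_2$ isolated in $G_F$, since the only neighbours $q_1,q_3$ of $q_2$ both lie in $\N[q_4]$. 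This already rules out basic $4$-cycles, so $r=0$. Running the same scheme at a degree-$2$ vertex of a basic $5$-cycle that is joined to another piece, and at the simplicial vertex of a simplex of size $\geq 3$ (whose clique-neighbours should be dominated through the attaching edges), would show that no piece can be joined to another by an edge; since $G$ is connected this leaves a single piece, whence $\alpha(G)\leq 2$, contradicting $\alpha(G)\geq 3$.

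The hard part will be exactly this last step. When a piece attaches to the rest of $G$, one must locate a private degree-$2$ (or simplicial) vertex \emph{all} of whose neighbours can be dominated simultaneously by an independent set $F$ of the precise size $\alpha-2$ without dominating the vertex itself, and whether this is possible depends delicately on how the pieces are glued: for instance a basic $5$-cycle with only one high-degree vertex offers no second externally reachable neighbour, so isolating a vertex fails, and one must instead exhibit three pairwise nonadjacent vertices in $G_F$ (again contradicting that $\b{G_F}$ is a cycle) or fall back on the parity condition $I(G,-1)=(-1)^{\alpha(G)}$ of \pref{goren graphs}. Treating these glued configurations uniformly, using \pref{W2lem} and \pref{W2}\pref{W2 deg} to clear away degree-$1$ simplices and other low-degree obstructions, is where the genuine case analysis lies.
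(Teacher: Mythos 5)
Your reverse direction, the reduction to connected components, the treatment of $\alpha(G)\leq 2$, and your exclusion of basic 4-cycles are all sound (the $q_2$-isolation argument is in fact a self-contained substitute for the paper's citation of Pinter). But the heart of the theorem --- showing that when $\alpha(G)\geq 3$ the \sqc\ partition cannot consist of more than one piece, i.e.\ ruling out two simplices, a simplex together with basic 5-cycles, and two or more basic 5-cycles --- is precisely what you defer as ``the hard part \dots\ where the genuine case analysis lies.'' That is a genuine gap, not a routine verification: your engine requires producing an \emph{isolated} vertex of $G_F$, i.e.\ an independent set $F$ of size exactly $\alpha(G)-2$ that dominates \emph{every} neighbour of some vertex without dominating the vertex itself, and, as you yourself note, this can be impossible (a basic 5-cycle meeting the rest of $G$ in a single vertex is exactly such an obstruction). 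So the proposal, as written, does not prove the statement.

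The paper sidesteps the attachment analysis entirely by weakening the target: since $\b{G_F}$ must be a cycle by \pref{goren graphs}, it is enough to find one vertex of $G_F$ with \emph{three non-neighbours} in $G_F$ (degree $\geq 3$ in $\b{G_F}$); no vertex need be isolated. The decisive structural observation is that a simplicial vertex $x_i$ and every degree-2 vertex of a basic 5-cycle have \emph{all} their neighbours inside their own piece of the partition; consequently such vertices taken from distinct pieces are pairwise non-adjacent, and an $F$ assembled from them dominates nothing outside the pieces it meets. Thus if there were two simplices $\N[x_1],\N[x_2]$, take $F=\{x_3,\dots,x_m\}$ together with two non-adjacent degree-2 vertices from each basic 5-cycle: then $|F|=\alpha(G)-2$, $\N[x_2]\se \V(G_F)$, and $x_1\in\V(G_F)$ is adjacent to none of $\N[x_2]$, so $\deg_{\b{G_F}}(x_1)\geq|\N[x_2]|\geq 3$, a contradiction obtained with no analysis of how the pieces are glued. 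The same trick eliminates one simplex together with $t\geq 1$ cycles; for $t\geq 2$ cycles and no simplex, the connecting edge guaranteed by connectivity is used to build $F$ with $G_F$ equal to an edge plus an isolated vertex, which is not the complement of a cycle; what survives is $m=0$, $t=1$, i.e.\ $C_5$ possibly with a chord, and the chord is excluded by \pref{goren graphs}. Replacing your isolation criterion by this ``three non-neighbours'' criterion and the private-neighbourhood observation is exactly what is needed to turn your outline into a complete proof.
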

\begin{proof}
Since $G$ is Gorenstein \ifof all of its components are so, we assume that $G$ is connected. Suppose that \pref{eq1} is
a \sqc\ partition of $\V(G)$. First note that $r=0$, since by \cite[Theorem 2]{pinter1} every vertex of a 4-cycle of
$G$ has degree at least 4 and $G$ cannot have any basic 4-cycles. Suppose that $S_i=\N[x_i]$. If $\deg(x_i)=1$ for some
$i$, then it follows from \pref{W2} that $G$ is just an edge. Thus assume that $\deg(x_i)\geq 2$ for each $i$.

Suppose that $m>1$. Note that in each basic 5-cycle of an arbitrary graph, there exist at least two non-adjacent
vertices of degree two. Let $F$ be the independent set consisting of $x_3,\ldots,x_m$ and two non-adjacent vertices of
degree two from each $C_i$. Then $|F|=\alpha(G)-2$ and by \pref{goren graphs} $\b{G_F}$ is a cycle. But as $x_1$ is not
adjacent to $N_G[x_2]$ in $G$ and $N_G[x_2]\se \V(G_F)$, we see that $\deg_{\b{G_F}}(x_1)\geq |\N_G[x_2]|\geq 3$, which
is a contradiction. If $m=1$, then $t>0$, else $G$ is a complete graph on at least 3 vertices and is not Gorenstein.
Let $F$ be the independent set consisting of one degree 2 vertex of $C_1$, say $y$, and two non-adjacent vertices of
degree 2 from each $C_j$ with $j>1$. Again $\b{G_F}$ should be a cycle but $deg_{\b{G_F}}(y)\geq |\N_G[x_1]|\geq 3$.
Therefore, we conclude that $m=0$.

Now suppose that $t\geq 2$. Because $G$ is connected, there is an edge between two vertices of two of the $C_i$'s, say
vertices $y_1$ and $y_2$ of $C_1$ and $C_2$, respectively. At least one of the vertices of $C_1$ which are not adjacent
to $y_1$ has degree 2, because $C_1$ is basic; call this vertex $y'_1$. Let $F$ to be the independent set containing
$y'_1,y_2$ and two non-adjacent vertices of degree 2 from each of the $C_i$ for $i>2$. Then $G_F$ is the disjoint union
of an edge and an isolated vertex which is not the complement of a cycle. This contradicts \pref{goren graphs} and we
conclude that $t=1$. Consequently, either $G= C_1$ is a 5-cycle, or $G$ is obtained from $C_1$ by adding one edge. In
the latter case, by applying \pref{goren graphs}, we see that $G$ is not Gorenstein and the result follows.
\end{proof}


\end{document}